\documentclass[11pt]{article}
\usepackage{amsmath,amsfonts, amssymb,amsthm,multicol,graphicx,tikz, enumitem}
\usepackage{makeidx}

\usepackage[utf8]{inputenc}
\usepackage{amsfonts, amssymb, bbm}
\usepackage{hyperref}
\newcommand{\cF}{\mathcal{F}}

\newcommand{\empt}{\varnothing}

\newtheorem{prop}{Proposition}   
\newtheorem{lem}[prop]{Lemma} 
\newtheorem{thm}[prop]{Theorem} 
\newtheorem{corollary}[prop]{Corollary}

\newtheorem{rem}[prop]{Remark}

\title{The $k$-color poset Ramsey number of the diamond is $2k$}
\author{Tom Bohman \, \and Will Cong \and Quentin Dubroff 
\and Teresa Huang \and Zheyuan Li \and BangTam Ngo}

\begin{document}

\maketitle

\begin{abstract}
The $k$-color Ramsey number of a poset $P$ is
the smallest dimension $d$ such that any $k$-coloring
of the $d$-dimensional Boolean lattice contains a monochromatic
induced copy of $P$. For $ j\ge 2$ the $j$-diamond is the poset that consists of a maximum 
element, a minimum element and an antichain of size $j$. We 
prove that the $k$-color Ramsey number of the $j$-diamond is at
most $jk$. Colorings of the Boolean lattice that assign sets of the
same size the same color provide matching lower bounds for $j \in \{2,3,4\}$.
\end{abstract}

\section{Introduction}

We consider Ramsey numbers for induced posets, a
line of inquiry that was initiated by Axenovich and Walzer \cite{AW}.
Let
$B_n$ denote the $n$-dimensional Boolean lattice, which is 
the subset lattice of an $n$-element set.
We define the $k$-color Ramsey number of a finite poset
$P$ to be the smallest $n$ such that any coloring of
the elements of $B_n$ with $k$ colors contains a 
monochromatic induced copy of $P$, where we say that 
$ {\mathcal P} \subseteq B_n$ is an induced copy of $P$ if there
is a bijection $ \varphi: P \to \mathcal P $ such that
$ x \le y $ if and only if $ \varphi(x) \subseteq \varphi(y)$.

As we are working within the Boolean lattice, which is
universal in the sense that every poset can be embedded
in a Boolean lattice of sufficiently large dimension, the natural
starting point for this study is the induced poset Ramsey numbers of 
the Boolean lattices themselves. There are a number of works that
address the 2-color case (i.e. the diagonal poset Ramsey numbers). 
See \cite{AW}, 
\cite{LT},
\cite{CoxStolee},  \cite{BohmanPeng} and \cite{Winterlower}. 
The currently best known bounds are
\[ 2.7n \le  R_2( B_n) \le n^2 - (1-o(1))n \log n, \] 
which were established by Ivan and Wessels \cite{IvanWessels}
and Axenovich and Winter \cite{AWinterDiag}, respectively. 

Here we focus on the multi-color poset Ramsey numbers. Let $ X = [n]$.
The Lubell mass of a collection of sets $ \cF \subseteq 2^{X}$ is defined to
be
\[  \ell( \cF) = \sum_{A \in \cF} \frac{1}{ |A| }.\]
M\'eroueh \cite{LYM} proved that for any fixed poset $P$ there is some constant $ C_P$ such that the
Lubell mass of 
$ \cF \subseteq 2^X$  that contains no copy of $P$ is at most $ C_P$. This result
directly implies that for any fixed poset $P$ we have $ R_k(P) \le C_P k$. 

Our focus is on the 2-dimensional Boolean lattice $B_2$, which is also
known as the diamond. This Ramsey number has been discussed in
the literature \cite{Vshaped} \cite{katona}. For example, Chen, Chen, Cheng, Li and Liu
note that forbidden poset theory implies $ R_k(B_2) \le 8k/3$ and ask
for the limit as $k$ tends to infinity of $ R_k(B_2)/k$. Here we resolve
this question in a strong form.
\begin{thm}\label{thm:diamond}
\[ R_{k}(B_2) = 2k.\]
\end{thm}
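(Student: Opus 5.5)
The statement has two halves. The lower bound $R_k(B_2) > 2k-1$ should come from a level coloring. The upper bound $R_k(B_2)\le 2k$ should come from a chain-averaging (Lubell-type) argument over $B_{2k}$.

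\emph{Lower bound.} I will color $B_{2k-1}$ by levels. It has $2k$ levels $0,1,\dots,2k-1$, and I give color $i$ to levels $2i-2$ and $2i-1$. An induced diamond $A\subsetneq B,C\subsetneq D$ with $B,C$ incomparable has $|A|<|B|$ and $|C|<|D|$. So its elements occupy at least three distinct levels. Hence no color class, which meets only two consecutive levels, contains one, and $R_k(B_2)\ge 2k$.

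\emph{Upper bound.} Fix a $k$-coloring of $B_{2k}$ with color classes $\cF_1,\dots,\cF_k$, and suppose for contradiction that no class contains an induced diamond. A full chain $\emptyset=C_0\subset C_1\subset\dots\subset C_{2k}=[2k]$ has $2k+1$ elements. If $\mathcal C$ is a uniformly random full chain, then
\[ \sum_{i=1}^k \mathbb E|\cF_i\cap \mathcal C| = 2k+1 . \]
The goal is a Lubell-type inequality for a single color class that gives a contradiction when summed over the $k$ colors. The natural target is: if $\cF\subseteq 2^{[n]}$ has no induced diamond, then $\mathbb E|\cF\cap\mathcal C|\le 2$, with the deficit strict or controlled enough to beat the extra $1$.

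The structural fact I will use is the following. If $A\subsetneq D$ both lie in $\cF$, then $\cF\cap[A,D]$ minus its endpoints must be a chain, since two incomparable sets strictly between $A$ and $D$ would form a diamond. To exploit this, I will pair each full chain with the chains obtained by swapping two consecutive elements, i.e.\ replacing $S\subset S+x\subset S+x+y$ by $S\subset S+y\subset S+x+y$. For a class meeting a chain in three sets $A\subset B\subset D$, the chain structure of $\cF\cap[A,D]$ should force many chains through $A$ and $D$ to miss $\cF$ strictly between them. Concretely, inside the interval $[A,D]$ (a Boolean lattice of dimension $|D\setminus A|$), a chain family has Lubell mass at most $1$ there, but it is spread over levels in such a way that, conditioned on passing through $A$ and $D$, a random chain hits the interior of $\cF\cap[A,D]$ with probability well below $1$ unless $|D\setminus A|$ is small. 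I will then do a charging argument: excess hits (the third, fourth, \dots\ hits of a class on a chain) are charged to chains where that class is hit at most once. The aim is to show that the expected number of excess hits is at most the expected number of deficits, so that each class contributes at most $2$ on average. Equality would have to be ruled out using $\emptyset$ and $[2k]$, which lie in exactly one class each.

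The hard step is this single-class inequality. Induced-diamond-free families in general can have Lubell mass slightly larger than $2$, so a per-class bound of exactly $2$ is probably false in isolation. If the per-class inequality fails, the fallback is to use the coloring globally. The chosen quantity would be a weighted count such as $\sum_i \mathbb E\bigl(|\cF_i\cap\mathcal C|-2\bigr)^+$ measured against the number of colors appearing on the chain with multiplicity at most one. I would then prove, by a local double count over pairs of chains differing by one swap, that the positive part is always dominated. Getting this local exchange inequality right is where I expect most of the work to be.
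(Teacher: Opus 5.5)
Your lower bound is correct and is the paper's level coloring (the three levels come from $A\subsetneq B\subsetneq D$). The upper bound is not proved. The argument rests on an inequality you never formulate, and the version you do formulate cannot work.

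Averaging over a random full chain only gives some class with Lubell mass at least $(2k+1)/k=2+\frac1k$. But a single maximal chain in $B_{2k}$ is induced-diamond-free and has Lubell mass $\sum_{i=0}^{2k}\binom{2k}{i}^{-1}>2+\frac1k$ for every $k\ge 2$. So no single-class inequality, however excess hits are charged to deficits within that class, can give the contradiction. This matches the introduction's remark that forbidden-poset methods yield only $R_k(B_2)\le 8k/3$.

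What remains is your fallback: a ``global weighted count'' controlled by a ``local exchange inequality'' over swapped chains. It is neither stated precisely nor proved, so the step carrying the entire upper bound is missing. Your remark that $\emptyset$ and $[2k]$ lie in one class each points at the right phenomenon, namely that the classes must interact. However, you give no mechanism that uses it.

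The paper supplies that interaction by induction on a mixed Ramsey number rather than by counting. Let $F_2$ be a maximum above two incomparable elements, and let $f(a,b)$ be the Ramsey number when $a$ colors must avoid $B_2$ and $b$ colors must avoid $F_2$. The paper proves $f(a,b)\le 2a+b+1$ for $b>0$ and $f(a,0)\le 2a$, using three reductions:
\begin{enumerate}
\item Deleting $\emptyset$ turns its color $c$ into an $F_2$-color, since an $F_2$ in color $c$ together with $\emptyset$ is a diamond. Once some color only has to avoid $F_2$, the missing $\emptyset$ is harmless, so $f(a,0)\le f(a-1,1)$.
\item If a singleton $\{x\}$ has a $B_2$-color $c$, pass to the sets strictly containing $\{x\}$. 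There an $F_2$ in color $c$ together with $\{x\}$ is an induced diamond, so $c$ becomes an $F_2$-color at the cost of one dimension: $f(a,b)\le f(a-1,b+1)+1$.
\item Suppose every singleton has an $F_2$-color and $\chi(\{w\})=c$. Replace each $A\subseteq X\setminus\{w\}$ lying in the up-set generated by nonempty $c$-colored sets by $A\cup\{w\}$. This gives an order-isomorphic copy of $2^{X\setminus\{w\}}\setminus\{\emptyset\}$ on which color $c$ is absent, unless $\{w\}$, some $c$-colored $B\subseteq A$, and $A\cup\{w\}$ already form an $F_2$ in color $c$. So $c$ is eliminated at the cost of one dimension.
\end{enumerate}
In the third reduction, when $b=1$ one instead passes to the interval above a $2$-set. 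This works because then no set of size at least $2$ can carry color $c$. Adding up these costs gives exactly $R_k(B_2)\le 2k$.
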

\noindent
This result is somewhat surprising in light of the recent progress on the
corresponding poset Tur\'an problem. It turns out that the maximum number
of sets in a family $ \cF \subset 2^X$ that does not contain a copy of $ B_2$
is significantly larger than the middle two layers of the Boolean lattice \cite{Tompkins}. In light of
this result, one might be tempted to believe that a number of such collections could be worked
into a partition of $2^X$ and thereby establish that the $k$-color Ramsey number is more
than $2k$. We show that this is not the case.

The proof of  Theorem~\ref{thm:diamond} adapts to the setting of 
general poset diamonds. For $j \ge 2$ define the $j-diamond$ to be the height 3 poset that
consists of a maximum element, a minimum element, and an antichain with
$j$ elements. Note that $ D_2 = B_2$. Our main result is the following.
\begin{thm}\label{thm:main}
Let $ j \ge 2$. Then
\[R_k( D_j) \le jk. \]
\end{thm}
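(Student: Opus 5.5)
The plan is to prove Theorem~\ref{thm:main} by induction on $k$. The inductive step rests on a ``color-removal'' lemma. Call a subcube $[A,B]=\{S: A\subseteq S\subseteq B\}$ of $B_n$ an \emph{interval of dimension $|B\setminus A|$}; it is isomorphic to $B_{|B\setminus A|}$, and induced copies of $D_j$ inside it are induced copies in $B_n$.

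\emph{Key Lemma.} Let $n\ge j$ and color $B_n$ with $k$ colors so that there is no monochromatic induced $D_j$. Then there is an interval of dimension $n-j$, of the form $[A,X\setminus B]$ with $A\cap B=\emptyset$ and $|A|+|B|=j$, that misses at least one color entirely.

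Given the lemma, the theorem follows quickly. The base case $k=1$ is immediate: $B_j$ contains $\emptyset$, $X$ and the $j$ singletons, which form an induced $D_j$. For the inductive step, take $n=jk$ and a $k$-coloring with no monochromatic $D_j$. The lemma gives an interval of dimension $j(k-1)$ that is colored with at most $k-1$ colors. By induction that interval contains a monochromatic induced $D_j$, which is a contradiction.

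To prove the Key Lemma, I would single out the color of $\emptyset$, call it red, and write $\mathcal R$ for the red family. The hypothesis says that for every red $T\neq\emptyset$, the red sets strictly between $\emptyset$ and $T$ contain no antichain of size $j$. By Dilworth's theorem, the red sets below any red $T$ are therefore covered by at most $j-1$ chains.

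The case $X\in\mathcal R$ is the model case. Here all of $\mathcal R\setminus\{\emptyset,X\}$ has width at most $j-1$, so it is a union of at most $j-1$ chains. For $j=2$ (a single chain $\emptyset=C_0\subset\cdots\subset C_m=X$), take $x\in C_m\setminus C_{m-1}$ and any $y$. The only red set containing $x$ is $X$, and $X$ contains $y$, so the interval $[\{x\},X\setminus\{y\}]$ avoids red. In general I would place one element in $A$ for each chain, chosen to ``escape'' that chain from its top in the same way, and fill the remaining budget of $j$ with elements of $B$.

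The general case, where $X$ is not red, is where I expect the main obstacle. Here red sets need not lie below a common red top, so the global width bound is lost. My first attempt is a greedy/extremal choice. Pick a maximal red set $T$ and use the width bound below $T$ to choose $A\subseteq T$ meeting each of the at most $j-1$ chains' escape points. Then choose elements of $B$ inside $T$ to kill the red sets below $T$, while elements of $A$ are chosen so that red sets not below $T$ do not contain $A$. If a single red family resists, I would also allow the removed color to be the color of $X$ instead, and argue symmetrically via complements. Balancing the budget $|A|+|B|=j$ against these two kinds of red sets is the step I expect to require the real new idea, possibly a cleverer choice of which color to remove than simply the color of $\emptyset$.
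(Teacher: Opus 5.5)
Your reduction of the theorem to the Key Lemma is logically fine, but the lemma carries all of the difficulty, and as stated (for every $n\ge j$) it is false. Take $j=k=2$, $n=3$, and color $S\subseteq[3]$ by the parity of $|S|$. Distinct comparable sets of the same color differ in size by at least $2$. A monochromatic $D_2$ would therefore need a top and bottom differing in size by at least $4$, which is impossible in $B_3$. Yet every interval $[A,X\setminus B]$ with $|A|+|B|=2$ is an edge $\{A,A\cup\{x\}\}$ and sees both colors. (Coloring by $|S| \bmod k$ does the same whenever $j+k-1\le n\le 2k-1$.)

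This example is exactly your ``general case'': $\emptyset$ is red, $X$ is not, and neither color can be removed. So no greedy choice of $A$ and $B$, and no smarter choice of which color to remove, can succeed unless the argument uses $n=jk$ in an essential way. But at $n=jk$ the lemma's hypothesis is never satisfied if the theorem is true, so proving the lemma there amounts to proving the inductive step itself. What you have actually proved is the case where $\emptyset$ and $X$ receive the same color, which is essentially the Dilworth argument of Lemma~\ref{lem:fans}. The remaining case is the whole theorem.

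The missing idea is to strengthen the induction hypothesis so that a color can be paid for in installments rather than removed outright. The paper introduces the fan $F_j$ (a maximum above a $j$-element antichain). It proves the mixed bound $f(a,b)\le ja+(j-1)b+1$ for $b>0$ and $f(a,0)\le ja$, where $a$ colors must avoid $D_j$ and $b$ colors must avoid $F_j$. The reductions are:
\begin{itemize}
\item The color of $\emptyset$ need only avoid $F_j$ on $2^X\setminus\{\emptyset\}$, at no cost.
\item A $D_j$-color on a singleton $\{x\}$ need only avoid $F_j$ inside $[\{x\},X]$, at the cost of one dimension.
\item If every singleton carries an $F_j$-color, pigeonhole gives a set $W$ of $j-1$ singletons sharing one such color $c$. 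Replace each $A\subseteq X\setminus W$ in the up-set generated by the $c$-colored sets with $A\cup W$. This gives a copy of $B_{|X|-(j-1)}$ in which $c$ does not appear unless a $c$-colored $F_j$ already exists.
\end{itemize}
That last copy is generally not an interval, so the color-free substructures the argument relies on are more flexible than the ones your lemma asks for. The bookkeeping is one dimension to demote a $D_j$-color to an $F_j$-color and $j-1$ dimensions to eliminate an $F_j$-color, which is what produces $jk$.
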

\noindent
Note that $ B_{j-1}$ does not contain a copy of $ D_j$ for $ j=2,3,4$. It follows that in these 
cases the coloring of $B_n$ that assigns $j$ consecutive level-sets the same color does not
contain a monochromatic induced copy of $ D_j$. This then implies $ R_k(D_j) \ge jk $ for $ j \in \{2,3,4\}$.
\begin{corollary}
We have $ R_k( D_3) = 3k $ and $ R_k(D_4) = 4k$.
\end{corollary}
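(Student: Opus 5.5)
The upper bounds $R_k(D_3)\le 3k$ and $R_k(D_4)\le 4k$ are the cases $j=3,4$ of Theorem~\ref{thm:main}, so only the matching lower bounds need proof. For these I will exhibit, for $j\in\{3,4\}$ and $n=jk-1$, a $k$-coloring of $B_n$ with no monochromatic induced copy of $D_j$. This gives $R_k(D_j)>jk-1$, that is, $R_k(D_j)\ge jk$.

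\textbf{The coloring.} The levels of $B_{jk-1}$ are $0,1,\dots,jk-1$, which is exactly $jk$ levels. Split them into $k$ blocks of $j$ consecutive levels, and give every set in block $c$ (levels $(c-1)j,\dots,cj-1$) color $c$. Suppose $\{A,C_1,\dots,C_j,B\}$ is a monochromatic induced copy of $D_j$ with minimum $A$ and maximum $B$. Then
\[ A\subsetneq C_t\subsetneq B \quad \text{for all } t, \]
and all of these sets lie in $j$ consecutive levels. Hence $m:=|B\setminus A|\le j-1$. The map $S\mapsto S\setminus A$ is an order isomorphism from the interval $[A,B]$ onto $B_m$. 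It sends $A$ to $\emptyset$, $B$ to the full set, and the $C_t$ to an antichain of size $j$ consisting of nonempty proper subsets of an $m$-set. It therefore suffices to show that for $m\le j-1$ with $j\in\{3,4\}$, the nonempty proper subsets of $[m]$ contain no antichain of size $j$.

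\textbf{The antichain check.} I will use Sperner's theorem: an antichain in $B_m$ has size at most $\binom{m}{\lfloor m/2\rfloor}$.
\begin{itemize}
\item For $j=3$ we have $m\le 2$, and $\binom{m}{\lfloor m/2\rfloor}\le 2<3$.
\item For $j=4$ we have $m\le 3$, and $\binom{m}{\lfloor m/2\rfloor}\le 3<4$.
\end{itemize}
Either way there is no antichain of size $j$, so there is no monochromatic induced $D_j$. This step does not even use the restriction to proper nonempty subsets. It is also exactly where the argument stops working for $j\ge 5$: then $m=j-1\ge 4$ and $\binom{m}{\lfloor m/2\rfloor}\ge j$. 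This explains the restriction to $j\le 4$.

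The only point that needs some care is the reduction to an interval of dimension at most $j-1$. One has to note that the inclusions $A\subsetneq C_t\subsetneq B$ force $A$ and $B$ to be at least two levels apart, and that the whole copy sits inside a single color class spanning $j$ levels. Everything else is routine.
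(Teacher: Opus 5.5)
Your proposal is correct and follows the paper's route. The upper bound is the $j=3,4$ case of Theorem~\ref{thm:main}, and the lower bound comes from coloring $B_{jk-1}$ in blocks of $j$ consecutive levels, using that $B_{j-1}$ contains no induced $D_j$ for $j\le 4$; the paper merely asserts this last fact, and your interval reduction plus Sperner's bound verifies it correctly.
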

\noindent
The smallest diamond for which the $k$-color Ramsey number remains open is $ D_5$. The best known
bounds in this case are $ 4k \le R_k( D_5) \le 5k$.

\section{Proof of Theorem~\ref{thm:main}}

Our proof is inductive, and our induction
establishes some additional poset Ramsey bounds
along the way. In order to set up this induction
we introduce another poset. For $ j\ge 2$ define $ F_j$ to be the
poset with $j+1$ elements which consists of a maximum
element and an antichain on the remaining $j$ elements. Our proof bounds
multicolor Ramsey numbers where some colors are assigned to $F_j$ and
others are assigned to $ D_j$. 

Note that if we are interested in $ R_k( P_1, P_2, \dots, P_k)$
and $ F_j$ is one of the posets in the collection
$ P_1, \dots, P_k$ then the element $\empt$ of $ B_n$ does not play a meaningful 
role when we are
establishing the desired Ramsey property; indeed, if color
$c$ is assigned to $F_j$ then
$ \empt$ could be colored with $c$ and then $ \empt$ does
not play a role in a copy of $ F_j$ colored with color $c$. 
In light of this situation we define a variant of the $k$-color
Ramsey number. If $ P_1, P_2, \dots, P_k$ are posets we define
$ R_k^*( P_1, \dots, P_k) $ to be the smallest $n$ such that
for any coloring of $B_n \setminus \{ \empt\}$ with colors 
$ \{c_1, \dots, c_k \}$ there is an index $i$ such that there
is a monochromatic copy of $P_i$ in color $c_i$. Our observation
here is 
\begin{equation} \label{eq:ignoringempty}
F_j \in \{ P_1, \dots, P_k \} \ \ \Rightarrow \ \ R_k( P_1, \dots P_k) = R_k^*( P_1, \dots, P_k). 
\end{equation}

\begin{lem} \label{lem:fans}
\[ R_k( F_j) = R_k^*( F_j) \le k(j-1) +1 \]
\end{lem}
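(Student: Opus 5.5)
The equality $R_k(F_j)=R_k^*(F_j)$ is just \eqref{eq:ignoringempty}, so the work is the upper bound. We show that every $k$-coloring of $B_n\setminus\{\empt\}$ with $n=k(j-1)+1$ contains a monochromatic copy of $F_j$. An induced copy of $F_j$ is simply a set $A$ together with $j$ pairwise incomparable proper subsets of $A$, all of the same color as $A$. Note that the $j$ subsets must be proper and nonempty, since they form an antichain below $A$ and $\empt$ has been removed.

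The plan is a greedy descent from the top. Set $A_0=[n]$, colored $c$. If $A_0$ has $j$ pairwise incomparable proper nonempty subsets of color $c$, we are done. Otherwise the family $\mathcal{C}=\{S\subsetneq A_0: S\neq\empt,\ S \text{ has color } c\}$ has width at most $j-1$. Then we would like to conclude that $\mathcal{C}$ avoids many sets. We look for a set $A_1\subseteq A_0$ with $|A_1|\ge |A_0|-(j-1)$ such that no nonempty subset of $A_1$ has color $c$. Iterating over the colors, each round removes one color from consideration at a cost of $j-1$ elements. After $k$ rounds we would be left with a set of size at least $n-k(j-1)=1$ all of whose nonempty subsets avoid every color, which is a contradiction.

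The key claim is this. If $\mathcal{C}$ is a family of nonempty subsets of an $m$-set $X$ with no antichain of size $j$, then there is $Y\subseteq X$ with $|X\setminus Y|\le j-1$ such that every nonempty subset of $Y$ lies outside $\mathcal{C}$. This fails as stated, since $\mathcal{C}$ could be one large chain that contains singletons of everything. So I would refine it. Run the induction on a set $A$ whose own color is the current color. Choose a maximal color-$c$ set $A$ (initially $[n]$) and pass to a child.

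A cleaner route is the following. By Dilworth, $\mathcal{C}\cup\{A_0\}$ minus $A_0$ is covered by at most $j-1$ chains. Pick one element $x_i$ from the minimal set of each chain, which gives at most $j-1$ elements. Then every member of $\mathcal{C}$ contains some $x_i$. Let $A_1=A_0\setminus\{x_1,\dots,x_{j-1}\}$. Then no nonempty subset of $A_1$ has color $c$, because any such set would lie in $\mathcal{C}$ and hence contain some $x_i$. This settles the claim, and the chain-minimum trick is the main point.

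For the iteration, at stage $t$ we have a set $A_t$ of size at least $n-t(j-1)$ whose nonempty subsets use only colors outside $\{c_0,\dots,c_{t-1}\}$. If $A_t$ is nonempty, let $c_t$ be the color of $A_t$ and apply the step inside $A_t$. Either $A_t$ is the top of a monochromatic $F_j$, or we get $A_{t+1}\subseteq A_t$ with $|A_{t+1}|\ge|A_t|-(j-1)$ whose nonempty subsets also avoid $c_t$. After $k$ stages no colors remain, yet $|A_k|\ge 1$, so some nonempty subset of $A_k$ has no available color. This contradiction gives the bound. The only delicate point is the Dilworth step: the chains must cover exactly the color-$c_t$ proper nonempty subsets of $A_t$, and we need each chain's minimal element to be nonempty so that it has an element to delete.
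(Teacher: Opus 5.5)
Your argument is the paper's induction on $k$ unrolled into an iteration. At each stage you take the color $c_t$ of the current top set $A_t$, cover the color-$c_t$ sets below it by at most $j-1$ chains using Dilworth, and delete one element from the minimum of each chain. This is exactly how the paper builds its hitting set $S$.

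There is one flaw, in a degenerate case. You apply Dilworth only to the \emph{proper} color-$c_t$ subsets $\mathcal{C}_t$ of $A_t$. Suppose $\mathcal{C}_t=\empt$, for example when $A_0=[n]$ is the only set of its color. Then the cover has no chains, nothing is deleted, and $A_{t+1}=A_t$ still has color $c_t$. Your sentence ``any such set would lie in $\mathcal{C}$'' is then false for $A_{t+1}$ itself. Consequently $c_{t+1}=c_t$, and the count of excluded colors stalls.

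Your expression ``$\mathcal{C}\cup\{A_0\}$ minus $A_0$'' throws away precisely the set that prevents this; the paper's $\mathcal{G}_c$ contains $X$ itself. The fix is to cover $\mathcal{C}_t\cup\{A_t\}$ instead. Its width is still at most $j-1$, since $A_t$ is comparable to every member, so there is at least one chain and $A_t$ itself is hit. Equivalently, delete an arbitrary element when $\mathcal{C}_t=\empt$, which is allowed because $j-1\ge 1$. Once at least one element is removed, $A_{t+1}\subsetneq A_t$, so every nonempty subset of $A_{t+1}$ is a proper subset of $A_t$, and the rest of your argument is correct.

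Separately, your ``key claim'' does not actually fail. A chain contains at most one singleton, so your proposed counterexample does not exist. The Dilworth step you give afterwards proves the claim exactly as stated. The only trouble was applying it to a family that omits $A_t$.
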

\begin{proof}
Let $j$ be fixed.
We go by induction on $k$. We clearly have $ R^*_1( F_j) \le  j$.

Now assume $k \ge 2$ and $ R^*_{k-1}(F_j) \le (k-1)(j-1)+1$ and consider a $k$-coloring of
$ 2^{ X} \setminus \{ \empt\}$ where $ |X| = \ell := k(j-1)+1$. 
Let $c$ be the color used on the set $X$. Let $ {\mathcal G}_c$ be the collection
of subsets of $X$ that are also colored with $c$. If there is no copy
of $F_j$ in ${\mathcal G}_c$ then the maximum size of an antichain in $ {\mathcal G}_c$
is $j-1$. It then follows from Dilworth's Theorem that $ {\mathcal G}_c$ can be
partitioned into at most $j-1$ chains. For each of these chains there is some particular
element of $X$ that appears in every set in the chain. Thus, there exists a set $S \subset X$ such
that $|S| \le j-1$ such that every set in $ {\mathcal G}_c $ intersects $S$.

Restricting our attention to $ 2^{X \setminus S}\setminus \{ \empt \}$ 
we have a coloring with at most $ k -1$ colors. As
\[ |X \setminus S| \ge \ell - (j-1) = (k-1)(j-1) +1, \]
the inductive assumption implies that this coloring contains
a monochromatic copy of $ F_j$.
\end{proof}
\begin{rem} The bound in Lemma~\ref{lem:fans} may be far from the truth for $j$ large relative to $k$. 
For example, we can 
interpret $R_1(F_j)$ to be the small dimension $n$ such that $B_n$ contains an antichain 
with $j$ elements, which is logarithmic in $j$ for large $j$, by Sperner.
\end{rem}
\begin{rem} The $j=2$ case of Lemma~\ref{lem:fans} was established by Chen et al. \cite{Vshaped}.
\end{rem}
Let $ j \ge 2$ be fixed.
For non-negative integers $a,b$, not both zero, define 
\[ f(a,b) = R_{a+b}( \underbrace{D_j, \dots D_j}_{ a \text{ times}}, \underbrace{F_j, \dots, F_j}_{b \text{ times}}). \]
In words, $ f(a,b)$ is the
$(a+b)$-color poset Ramsey number in which $a$ colors are assigned to the poset $D_j$ and $b$ colors
are assigned to $ F_j$. We prove the following generalization of Theorem~\ref{thm:main}.
\begin{thm} If $ j \ge 2$ then
\begin{equation} 
\label{eq:full}
f(a,b) \le \begin{cases} ja + (j-1)b + 1 & \text{ if } b>0 \\
ja & \text{ if } b =0. 
\end{cases} 
\end{equation}
\end{thm}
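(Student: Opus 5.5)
The plan is to prove a symmetric strengthening of \eqref{eq:full} by induction on $a+b$, mirroring the proof of Lemma~\ref{lem:fans}. Let $F_j^{\vee}$ denote the dual of $F_j$: a minimum element together with an antichain of $j$ elements. The two fan types are exchanged by the complementation map $A\mapsto X\setminus A$, and $D_j$ is self-dual. I would prove
\[
R_{a+b}\bigl(\underbrace{D_j,\dots,D_j}_{a},\,P_1,\dots,P_b\bigr)\le ja+(j-1)b+1 \quad (b>0),\qquad f(a,0)\le ja,
\]
where each $P_i\in\{F_j,F_j^{\vee}\}$. The dual of \eqref{eq:ignoringempty} also holds: if some $P_i=F_j^{\vee}$, the top element $X$ may be discarded. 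The base cases are $f(1,0)\le j$, which holds because any $D_j$-free coloring of $B_j$ fails trivially (the single color contains all of $B_j\supseteq D_j$ for $j\le 4$; for general $j$ I would instead start the induction at $a+b=1$ via Lemma~\ref{lem:fans}-style reasoning), and the one-color fan case, which is $R_1^*(F_j)\le j$.

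\emph{Fan step.} Suppose $b>0$, so there is a fan color. If $X$ receives an $F_j$-color $c$, the argument of Lemma~\ref{lem:fans} applies verbatim. The $c$-colored proper subsets of $X$ have no antichain of size $j$, so by Dilworth's Theorem they are covered by at most $j-1$ chains. Hence some $S$ with $|S|\le j-1$ meets every $c$-colored set. On $2^{X\setminus S}$ the color $c$ can then occur only on $\empt$. The dimension drops by at most $j-1$, which matches the bound $f(\cdot,b-1)$ exactly. When $b-1=0$, the leftover $\empt$ in color $c$ is handled by the extra $+1$: fix $y\in X\setminus S$ and pass to the interval $[\{y\},X\setminus S]$, which is a Boolean lattice of dimension at least $ja$ avoiding color $c$. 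If instead $\empt$ receives an $F_j^{\vee}$-color, I would run the dual argument on complements.

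To guarantee that one of these two cases occurs, I would order the reduction as follows. Look at the colors of $X$ and of $\empt$. If either carries an appropriately oriented fan color, do the fan step. Otherwise, I would aim to show that the top and bottom colors can still be exploited, as described next.

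\emph{Diamond step.} Suppose $X$ has a $D_j$-color $c$. Then the $c$-colored sets in $2^X\setminus\{X\}$ contain no copy of $F_j^{\vee}$, since $X$ would complete it to a $D_j$. So on $B_n\setminus\{X\}$, color $c$ may be reassigned to $F_j^{\vee}$. Using the dual of \eqref{eq:ignoringempty}, the problem becomes an instance with parameters $(a-1,b+1)$. The resulting bound is $j(a-1)+(j-1)(b+1)+1=ja+(j-1)b$. This is the correct value when $b=0$, and it is within the required bound when $b>0$.

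The main obstacle is making this bookkeeping consistent. Reassigning a $D_j$-color must not leave us needing to exclude both $X$ and $\empt$ at once, and the induction must handle mixed orientations of fans. I would first check carefully that the fan step with both orientations present still loses only $j-1$ dimensions. When $X$ carries an $F_j^{\vee}$-color, so that the top is irrelevant to that color, I would first apply the diamond or fan step at $\empt$ instead. The base cases for general $j$ also need care: I would take the true base of the induction to be the single-fan case $R^*_1(F_j)\le j$ and build every $D_j$-color from it through the diamond step.
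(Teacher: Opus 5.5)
\textbf{There is a genuine gap: the strengthened statement you plan to prove is false, and the top/bottom case analysis does not cover all colorings.}

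\emph{The mixed strengthening is false.} Your induction rests on a bound that fails as soon as both fan orientations occur. Take $j=2$, $a=0$, $P_1=F_2$, $P_2=F_2^{\vee}$. Color levels $0,1$ of $B_3$ with the $F_2$-color and levels $2,3$ with the $F_2^{\vee}$-color. A set of the first color has at most one set of its color ($\empt$) strictly below it. A set of the second color has at most one set of its color strictly above it. So there is no monochromatic copy, and $R_2(F_2,F_2^{\vee})\ge 4>(j-1)b+1=3$. Analogous level colorings beat your bound by one in every mixed instance for $j=2$, including $a\ge 1$. The reason is that with both orientations present, both $\empt$ and the top are free.

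\emph{Your diamond step produces exactly these instances.} When $b>0$ and $X$ receives a $D_j$-color, which a coloring may well do, you turn that color into an $F_j^{\vee}$-color alongside the existing $F_j$-colors. You then invoke the false bound.

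\emph{The case analysis at the extremes is incomplete.} If $X$ has an $F_j^{\vee}$-color and $\empt$ an $F_j$-color, both are irrelevant to their colors. Neither your fan step nor your diamond step applies at either end. So even a corrected (one larger) mixed bound could not be proved by looking only at $X$ and $\empt$. A minor point: $\empt$, the singletons and $[j]$ form a $D_j$ in $B_j$ for every $j$, so $f(1,0)\le j$ needs no restriction to $j\le 4$.

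\emph{The missing idea is to drive the reduction from the atoms.} This keeps every fan pointing upward, so \eqref{eq:ignoringempty} is always available.
\begin{itemize}
\item If some $\{x\}$ has a $D_j$-color $c$, pass to the up-set of $\{x\}$ via $\psi(A)=\chi(A\cup\{x\})$ on $2^{X\setminus\{x\}}\setminus\{\empt\}$. There $c$ can be treated as an $F_j$-color, since a $c$-colored $F_j$ together with $\{x\}$ is a $D_j$. So $f(a-1,b+1)$ applies at a cost of one dimension.
\item If every singleton has a fan color, pigeonhole gives $W$ with $|W|=j-1$ whose singletons share a fan color $c$. Recolor each set $A$ in the up-set generated by the nonempty $c$-colored subsets of $X\setminus W$ by $\chi(A\cup W)$, leaving other sets unchanged. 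Either this produces a $c$-colored $F_j$ (the singletons of $W$, a $c$-colored $B\subseteq A$, and $A\cup W$), or it removes $c$ at a cost of $j-1$ dimensions, matching $f(a,b-1)$.
\item For $b=1$ with all singletons in the fan color $c$, no $c$-colored set has size $\ge j$. Then the up-set of any $j$-set is a $c$-free $B_{ja}$, and $f(a,0)\le ja$ applies.
\end{itemize}
Your Dilworth step at a top carrying an $F_j$-color is sound. For $b=0$ your diamond step is just the dual of the paper's treatment of $\chi(\empt)$. For $b>0$, however, the induction has to pass through the singletons.
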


\begin{proof}
We go by induction on the number of colors, which is $ k = a +b$. And for
each fixed number of colors we go by induction on $a$.

We clearly have $ R_1(D_j), R_1( F_j) \le j$, which establishes the base case for the
induction on the number of colors.

For a fixed number of colors $k\ge 2$ we note that Lemma~\ref{lem:fans} establish the
base case, which is the statement $ f(0,k) = R_k( F_j) = R^*_k( F_j) \le (j-1)k+1$.

So it suffices to establish the bound on $ f(a,b)$ given in (\ref{eq:full}) for $a \ge 1, a+b \ge 2$
while assuming the bounds on $ f(a-1, b+1)$ and $ f(a-1,b)$, as well as the bound on $ f(a,b-1)$
if $b>0$, given in (\ref{eq:full}). We emphasize that we can appeal to (\ref{eq:ignoringempty}), and thereby ignore the color assigned to $ \empt$, 
whenever we are in a situation when at least one color is assigned to $F_j$.

We divide the argument into three cases. First suppose $ b > 1$. Recalling (\ref{eq:ignoringempty}), we can
ignore the color assigned to $ \empt$. Consider
a coloring $\chi$ of $ 2^X \setminus \{ \empt\} $, where 
$|X| =  ja + (j-1)b + 1$,  with $a+b$ colors where $C_1$ is the set of
$a$ colors assigned to $ D_j$ and $C_2$ is the set of $b$ colors 
assigned to colors $F_j$. We consider two subcases.
\begin{itemize}
\item[Case 1.] There is an element $ x \in X$ such that $ \chi(\{x\}) \in C_1$. Here we set
$ Y = X \setminus \{x\} $ and consider the coloring $\psi$ of $ 2^Y \setminus \{ \empt \}$ given
by $ \psi(A) = \chi( A \cup \{x\}) $.  As
\[ |Y| = |X| -1 = j(a-1) + (j-1)(b+1) +1,\]
the inductively assumed bound on $ f(a-1, b+1)$ together with (\ref{eq:ignoringempty}) implies that
this coloring contains a copy of $ D_j$ colored with some color in $C_1 \setminus \{ \chi(\{x\}) \}$, 
a copy of $F_j$ colored with some color $ C_2$ or a copy of $ F_j$ in color $ \chi( \{x\})$. In
the first two cases we immediately have the desired structure in the coloring $ \chi $. In the
last we note that the copy of $ F_j$ in color $ \chi( \{x\}) $ together with $\{x\}$ is a copy of
$D_j$ in color $\chi( \{x\}) $.

\item[Case 2.] For all  $ x \in X$ we have $ \chi(\{x\}) \in C_2$. 
By pigeonhole there is a color $ c \in C_2 $ and a set $ W \subset X$ such that $ |W| = j -1$
and $ \chi( \{w\}) = c$ for all $ w \in W$. (N.b. Pigeonhole actually gives more than this, but
this is what we need for the proof.)

Now we define a coloring on $ 2^{ X \setminus W} \setminus \{ \empt \}$. We begin by letting $ {\mathcal H}_c $
be all sets $  \empt \neq A \subseteq X \setminus W$ such that there exists $ \empt \neq B \subseteq A$ such
that $ \chi( B) = c$. In words $ {\mathcal H}_c$ is the up-set generated by all nonempty subsets of $ X \setminus W$ that
are colored with color $ c$. We define a coloring $ \psi $ of $ 2^{ X \setminus W} \setminus \{ \empt \}$
as follows:
\[ \psi(A) = \begin{cases} \chi(A) & \text{ if } A \not\in {\mathcal H}_c \\
\chi(A \cup W) & \text{ if } A \in {\mathcal H}_c. 
\end{cases}
\]

Suppose there is some set $A$ such that $ \psi(A) = c$. Then $ A \in {\mathcal H}_c $,
$ \chi(A \cup W) = c$, and there is some $B \subseteq A$ such that $ \chi(B) = c$. In this situation the
sets $ \{w\}$ for each $w \in W$, $B$ and $A\cup W$ form a copy of $F_j$ in color $c$ in the coloring $ \chi$. On
the other hand, if 
there is no set $A \in 2^{ X \setminus W} \setminus \{ \empt \}$ such that $ \psi(A) = c$ then the coloring $ \psi$ uses
$ a+b-1$ colors. As
\[ |X \setminus W| = ja + (j-1)b +1 -  (j-1) = ja + (j-1)(b-1) +1,\]
the inductive assumption on $ f(a,b-1)$ together with (\ref{eq:ignoringempty}), and recalling that we are assuming $ b-1 \ge 1$, implies that $ \psi$ has a copy of $ D_j$ in some
color in $C_1$ or a copy of $F_j$ in some color in $ C_1 \setminus \{c\}$. Observe that the poset induced on 
\[ \left(  2^{X \setminus W} \setminus
\mathcal{H}_c \right) \bigcup \left\{ A \cup W: A  \in \mathcal{H}_c  \right\}\] is order isomorphic to
the Boolean cube on $|X \setminus W|$ elements, so a monochromatic induced
copy of a poset in $\psi$ is a monochromatic copy in $\chi$.
%
%

\end{itemize}

Now consider the case $b=1$. Consider
a coloring $\chi$ of $ 2^X \setminus \{ \empt\} $, where 
$|X| =  ja + (j-1) + 1$,  with $a+1$ colors where $C_1$ is the set of
$a$ colors assigned to $ D_j$ and $c$ is the single color assigned to $F_j$. Now
we note that if there $x \in X$ such that $ \chi( \{x\}) \neq c$ then we 
apply the argument in Case 1 above to conclude that the desired monochromatic
induced poset exists in the coloring $ \chi$.  Suppose, on the other hand, that $ \chi( \{x\}) = c$ for
all $ x \in X$.  Then we have a copy of $ F_j$ in color $c$ if any set on at least $j$ elements is
colored with color $c$. Assume that this is not the case, and let $Y \in \binom{X}{j}$ be arbitrary. Consider the coloring $ \psi$ of
$ 2^{X \setminus Y}$ given by $ \psi(A) = \chi( A \cup Y)$. We emphasize that in this
case we include $\empt$ in the coloring. As we have $ | X \setminus Y| =ja $ our
inductive assumption on $ f(a,b-1) = f(a,0)$ implies that there is a mononchromatic copy of
$D_j$ in the coloring $ \psi$. This is also a monochromatic copy of $D_j$ is $ \chi$.

Finally, consider $b=0$.
Let $ X$ be a set such that $ |X| = ja $ and consider a coloring $ \chi$ of $2^X$ with
$a$ colors. Let $c = \chi(\empt)$. By our inductive assumption
$ f(a-1,1) \le (a-1)j + (j-1) + 1 = aj$. Recalling (\ref{eq:ignoringempty}), it follows that the coloring of $ 2^X \setminus \{ \empt \}$ given by the restriction of $ \chi$ has
a monochromatic $ D_j$ in a color other than $c$ or a copy of $ F_j$ in color $c$. In either case, $\chi$ itself has monochromatic copy of $ D_j$.

\end{proof}

\medskip

\noindent
{\bf Acknowledgment.} This research was supported in part by the Natasha Parker Endowed Research Fund. The first author was supported by NSF grant DMS-2246907. 

\medskip

\noindent
{\bf AI statement.} AI tools were not used in the production
of this research or the composition of this manuscript.

\bibliographystyle{plain}
\bibliography{reference}

\end{document}